\newtheorem{theorem}{theorem}[section]
\newtheorem{exmp}[theorem]{Example}
\newtheorem{lem}[theorem]{Lemma}
\newtheorem{prop}[theorem]{Proposition}
\newtheorem{rmk}[theorem]{Remark}
\newtheorem{thm}[theorem]{Theorem}
\begin{document}

\title{\textbf{Trace-free ${\rm SL}(2,\mathbb{C})$-representations of arborescent links}}
\author{\Large Haimiao Chen
\footnote{Email: \emph{chenhm@math.pku.edu.cn}} \\
\normalsize \em{Beijing Technology and Business University, Beijing, China}}
\date{}
\maketitle

\begin{abstract}
  Given a link $L\subset S^3$, a representation $\pi_1(S^3-L)\to{\rm SL}(2,\mathbb{C})$ is {\it trace-free} if it sends each meridian to an element with trace zero. We present a method for completely determining trace-free ${\rm SL}(2,\mathbb{C})$-representations for arborescent links. Concrete computations are done for a class of 3-bridge arborescent links.

  \medskip
  \noindent {\bf Keywords:}  trace-free representation, arborescent link, arborescent tangle, 3-bridge \\
  {\bf MSC2010:} 57M25, 57M27
\end{abstract}

\section{Introduction}

Let $G$ be a linear group. Given a link $L\subset S^{3}$, a {\it trace-free $G$-representation} is a homomorphism $\rho:\pi_{1}(S^{3}-L)\to G$ which sends each meridian to an element with trace zero.

Researchers have paid attention to such representations for long.
For a knot $K$, Lin \cite{Li92} defined an invariant $h(K)$ roughly by counting (with signs) conjugacy classes of trace-free ${\rm SU}(2)$-representations of $K$, and showed that $h(K)$ equals half of the signature of $K$.
Interestingly, it was shown in \cite{Le14} that if $L$ is an alternating link or a 2-component link, then its Khovanov homology is isomorphic to the singular homology of the space of binary dihedral representations (a special kind of trace-free ${\rm SU}(2)$-representations) of $L$, as graded abelian groups.
Also interesting is that, by the result of \cite{NY12}, each trace-free ${\rm SL}(2,\mathbb{C})$-representation of $L$ gives rise to a representation $\pi_1(M_2(L))\to{\rm SL}(2,\mathbb{C})$, where $M_2(L)$ is the double covering of $S^3$ branched along $L$.

As is well-known, linear representations of links are useful, but there is no general method to systematically find nontrivial ones.
We may first focus on trace-free representations, which turn out to be easier to manipulate. This makes up another motivation.
In this paper we aim to determine all trace-free ${\rm SL}(2,\mathbb{C})$-representations for each arboresent link, which is a continuation of the previous work \cite{Ch16}.
From now on, we abbreviate ``trace-free ${\rm SL}(2,\mathbb{C})$-representation" to ``representation".
The main results are Theorem \ref{thm:key} and Theorem \ref{thm:method}. Based on them, we are able to explicitly determine all representations for any given arborescent link.

We introduce some basic notions in Section 2, and define representation of a tangle in Section 3. In the main body, Section 4, we clarify properties of representations of arborescent tangles, uncovering an exquisite structure in the representation space, and then explain how to determine all representations for arborescent tangles. These lead to a practical method to find all representations for arborescent links, as presented in Section 5. As an illustration, explicit formulas are given for a class of 3-bridge arborescent links.

It should be pointed out that most of the results in this paper will remain valid when $\mathbb{C}$ is replaced by a general field, or even a ring.

\section{Preliminary}

\subsection{Tangles and tangle diagrams}

Adopt the notions in \cite{KL03}.
A {\it 2-tangle} $T$ is an embedding of $[0,1]\sqcup[0,1]\sqcup mS^1$ (where $mS^1$ denotes the disjoint union of $m$ copies of $S^1$ and $m\ge 0$) into a 3-dimensional ball $B^3$, such that $\partial T\cap B^3$ is a specific
set of four points on $\partial B^3=S^2$.
Two 2-tangles $T, T'$ are {\it isotopic} if there exists a homeomorphism $h:B^3\to B^3$ such that $h|_{S^2}={\rm id}$ and $h(T)=T'$.

Each tangle can be represented by a diagram by a usual manner, as done throughout the paper. Abusing the notation, by ``tangle" we also mean a diagram, whenever there is no ambiguity.

\begin{figure} [h]
  \centering
  \includegraphics[width=0.8\textwidth]{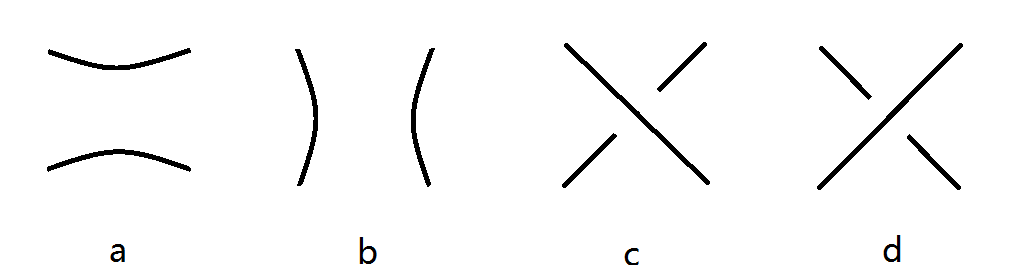}\\
  \caption{The simplest four tangles: (a) $[0]$, (b) $[\infty]$, (c) $[1]$, (d) $[-1]$}\label{fig:basic}
\end{figure}

\begin{figure}
  \centering
  \includegraphics[width=0.63\textwidth]{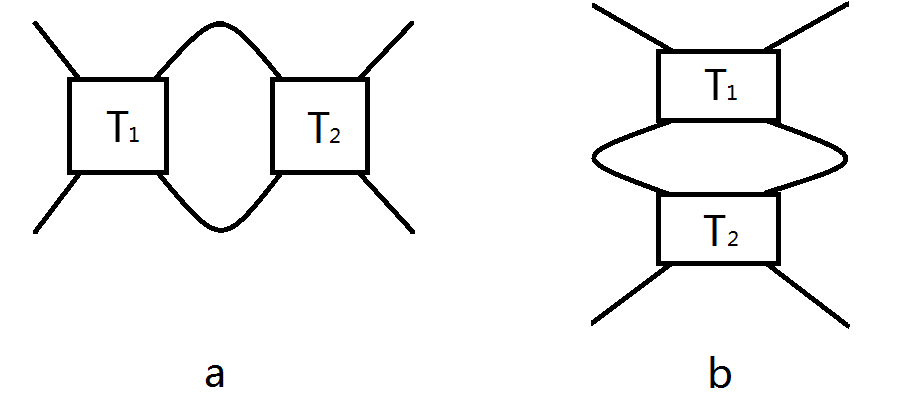}\\
  \caption{(a) $T_{1}+T_{2}$; (b) $T_{1}\ast T_{2}$} \label{fig:composition}
\end{figure}

Let $\mathcal{T}_{2}$ denote the set of 2-tangles.
The simplest four ones are given in Figure \ref{fig:basic}. In $\mathcal{T}_{2}$ there are two binary operations: {\it horizontal composition} $+$ and {\it vertical composition} $\ast$, as illustrated in Figure \ref{fig:composition}.
Let $\mathcal{T}_{\textrm{ar}}$ denote the subset of $\mathcal{T}_{2}$ containing $[\pm 1]$ and closed under $+$ and $\ast$.
An element of $\mathcal{T}_{\textrm{ar}}$ is called an {\it arborescent tangle}. Each arborescent tangle other than $[0],[\infty]$ can be obtained from copies of $[\pm 1]$ via iterated horizontal and vertical compositions.

\begin{lem}
There is a unique function $f:\mathcal{T}_2\to\mathbb{Q}\cup\{\infty\}$ such that $f(T)$ depends only on the isotopy class of $T$, and characterized by
\begin{align}
f([\pm 1])&=\pm 1, \label{eq:f(pm1)} \\
f(T_{1}+T_{2})&=f(T_{1})+f(T_{2}), \label{eq:f1} \\
f(T_{1}\ast T_{2})&=\frac{1}{1/f(T_{1})+1/f(T_{2})}. \label{eq:f2}
\end{align}
As a convention, $1/0=\infty$, $1/\infty=0$, $a+b=\infty$ if $a=\infty$ or $b=\infty$.
\end{lem}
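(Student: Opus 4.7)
My plan is to split the claim into uniqueness and existence, and to handle the latter by way of the double branched cover.

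\emph{Uniqueness.} By the observation immediately preceding the lemma, every arborescent tangle other than $[0]$ and $[\infty]$ admits at least one tree decomposition as an iterated $+,*$ composition of copies of $[\pm 1]$. Using any such decomposition, the conditions (\ref{eq:f(pm1)}), (\ref{eq:f1}), (\ref{eq:f2}) propagate the values $\pm 1$ through the tree and recursively force $f(T)$ to be a uniquely determined element of $\mathbb{Q}\cup\{\infty\}$. For the two remaining tangles, the isotopies $[0]\simeq [1]+[-1]$ and $[\infty]\simeq [1]*[-1]$, combined with (\ref{eq:f1}) and (\ref{eq:f2}), pin down $f([0])=0$ and $f([\infty])=\infty$. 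Any two candidate functions thus agree.

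\emph{Existence.} I define $f$ by the recursion above and prove well-definedness through the double branched cover $\Sigma(B^3,T)$, whose boundary is a torus $\mathcal{T}_T$. Fix a basis $(\mu,\lambda)$ of $H_1(\mathcal{T}_T;\mathbb{Z})$ lifted from the two standard arcs in $\partial B^3\setminus \partial T$ joining opposite pairs of the four boundary points of $T$, oriented so that $\mu$ is killed by the $[0]$-filling and $\lambda$ by the $[\infty]$-filling. Let $F(T)\in\mathbb{Q}\cup\{\infty\}$ be the slope on $\mathcal{T}_T$ represented by the lift of a meridional curve associated to capping $T$ with a trivial disk. A direct check gives $F([\pm 1])=\pm 1$. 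The formulas (\ref{eq:f1}) and (\ref{eq:f2}) then follow from the fact that horizontal composition $T_1+T_2$ glues the two branched covers along a disk in such a way that the resulting boundary slope is the sum of the two input slopes, whereas vertical composition $T_1*T_2$ glues them so as to induce reciprocal addition. Because $F$ is intrinsically a topological invariant of the pair $(B^3,T)$, it is invariant under ambient isotopy; it therefore satisfies all three conditions and thus coincides with the $f$ characterized in the uniqueness step.

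\emph{Main obstacle.} The delicate point is the verification that horizontal and vertical tangle composition really do translate, on the level of the boundary tori of the branched covers, into slope-addition and reciprocal slope-addition. This is a classical Kirchhoff-type computation, essentially the Conway--Montesinos formula for rational/arborescent tangles; once it is in place, isotopy invariance and the three defining identities come for free, and the lemma reduces to the bookkeeping above.
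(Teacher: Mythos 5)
The paper offers no proof of this lemma; it is quoted from \cite{KL03}, where existence is obtained by defining the fraction through the ratio of the two coefficients in the Kauffman bracket expansion $\langle T\rangle=d(T)\,\langle[0]\rangle+n(T)\,\langle[\infty]\rangle$ (suitably specialized), an invariant for which isotopy invariance is automatic and the two composition formulas are a short skein computation. Your uniqueness half is essentially sound: structural induction over any $+,\ast$ decomposition into copies of $[\pm1]$ forces the value of every candidate $f$ on each arborescent tangle, and the isotopies $[0]\simeq[1]+[-1]$, $[\infty]\simeq[1]\ast[-1]$ handle the two exceptional tangles. (Strictly, this pins $f$ down only on $\mathcal{T}_{\rm ar}\cup\{[0],[\infty]\}$, not on all of $\mathcal{T}_2$ as the lemma literally asserts -- the three conditions say nothing about non-arborescent tangles. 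That is a defect of the statement rather than of your argument, and the restricted uniqueness is all the paper ever uses.)

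The existence half has a genuine gap. The quantity $F(T)$ you introduce is not well defined as stated: for a tangle that is not rational, the double branched cover $\Sigma(B^3,T)$ is not a solid torus, so there is no ``meridional curve associated to capping $T$ with a trivial disk'' and hence no distinguished slope of that kind on $\partial\Sigma(B^3,T)$. (The idea can be repaired -- for instance, take the slope of $\ker\bigl(H_1(\partial\Sigma;\mathbb{Q})\to H_1(\Sigma;\mathbb{Q})\bigr)$, which is a line by the ``half lives, half dies'' principle -- but that is not what you wrote, and it changes the subsequent verifications.) More seriously, the assertion that $+$ and $\ast$ translate on the boundary tori into slope addition and reciprocal slope addition is precisely the content of (\ref{eq:f1}) and (\ref{eq:f2}); labelling it a classical Kirchhoff-type computation and deferring it to a closing remark does not discharge it, and without it nothing in your existence step is established, including $F([\pm1])=\pm1$ in a coherently chosen basis. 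Either carry out that Mayer--Vietoris/gluing computation explicitly, or replace the branched-cover construction by the bracket-polynomial (conductance) definition of the fraction, where the three defining identities follow from a direct two-term skein expansion.
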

This was established in \cite{KL03}. Call $f(T)$ the {\it fraction} of $T$.

For $k\ne 0$, the horizontal composite of $|k|$ copies of $[1]$ (resp. $[-1]$) is denoted by $[k]$ if $k>0$ (resp. $k<0$), and the vertical composite of $|k|$ copies of $[1]$ (resp. $[-1]$) is denoted by $[1/k]$ if $k>0$ (resp. $k<0$). Obviously, $f([k])=k$ and $f([1/k])=1/k$.
Given integers $k_{1},\ldots,k_{m}$, the \emph{rational tangle} $[[k_{1}],\ldots,[k_{m}]]$ is defined as
\begin{align}
\begin{cases}
[k_{1}]\ast [1/k_{2}]+\cdots\ast [1/k_{m}], &\text{if\ \ } 2\mid m, \\
[k_{1}]\ast [1/k_{2}]+\cdots +[k_{m}], &\text{if\ \ } 2\nmid m.
\end{cases}
\end{align}
It is easy to see that
\begin{align}
f([[k_{1}],\ldots,[k_{m}]])=[[k_{1},\ldots,k_{m}]]^{(-1)^{m-1}},
\end{align}
where the \emph{continued fraction} $[[k_{1},\ldots,k_{m}]]\in\mathbb{Q}$ is defined inductively as
\begin{align}
[[k_{1}]]=k_{1}, \hspace{10mm}
[[k_{1},\ldots,k_{m}]]=k_{m}+1/[[k_{1},\ldots,k_{m-1}]].
\end{align}
By Theorem 3 of \cite{KL04}, two rational tangles are isotopic if and only if their fractions coincide.
Denote $[[k_{1}],\ldots,[k_{m}]]$ as $[p/q]$ if its fraction equals $p/q$.

A {\it Montesinos tangle} is a tangle of the form $[p_1/q_1]\ast\cdots\ast[p_n/q_n]$.

\begin{figure} [h]
  \centering
  \includegraphics[width=0.56\textwidth]{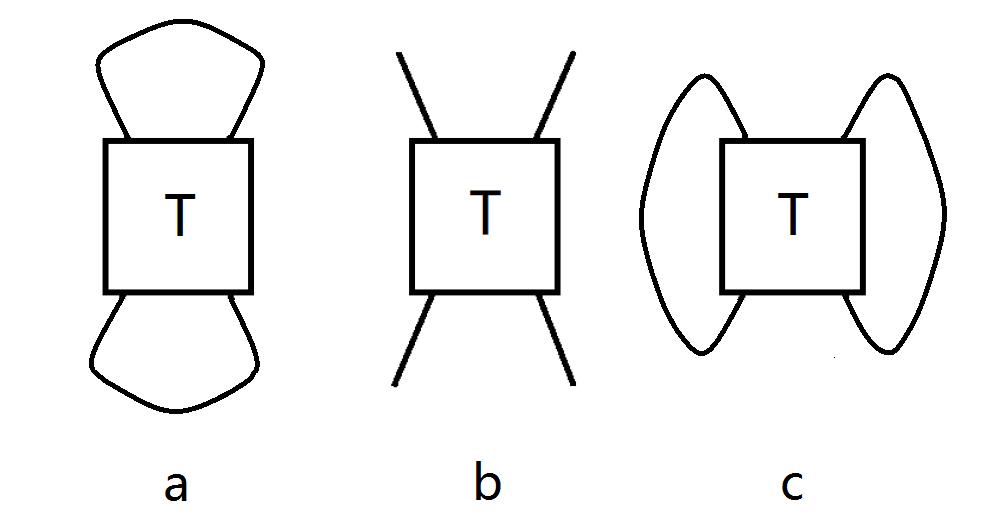}\\
  \caption{(a) the numerator of $T$; (b) a tangle $T$; (c) the denominator of $T$}   \label{fig:D-N}
\end{figure}

Each tangle $T\in\mathcal{T}_{2}$ leads to two links: the {\it numerator} $N(T)$ and the {\it denominator} $D(T)$; see Figure \ref{fig:D-N}.
An {\it arborescent link} is a link of the form $N(T)$ or $D(T)$, with $T\in\mathcal{T}_{\rm ar}$.
In particular, a {\it Montesinos link} is a link of the form $D([p_1/q_1]\ast\cdots\ast[p_n/q_n])$, denoted as $M(p_1/q_1,\ldots,p_n/q_n)$.

\subsection{Some linear algebra}

Let ${\rm SL}^{0}(2,\mathbb{C})$ denote the subset of ${\rm SL}(2,\mathbb{C})$ consisting of matrices with trace zero.
For any $X,Y\in{\rm SL}^{0}(2,\mathbb{C})$, we have
\begin{align}
X^{-1}=-X,  \qquad XYX^{-1}=-{\rm tr}(XY)\cdot X-Y.   \label{eq:basic}
\end{align}

For $X,Z\in{\rm SL}(2,\mathbb{C})$, denote $ZXZ^{-1}$ by $Z.X$. For $X,Y,X',Y'\in{\rm SL}^0(2,\mathbb{C})$, denote $(X,Y)\sim(X',Y')$ if there exists $Z\in{\rm SL}(2,\mathbb{C})$ such that $Z.X=X'$ and $Z.Y=Y'$.
This defines an equivalence relation on ${\rm SL}^0(2,\mathbb{C})\times{\rm SL}^0(2,\mathbb{C})$.

For $a,b,c\in\mathbb{C}$ with $a,b\ne 0$, let
\begin{align}
t(a)&=a+a^{-1}, & &\ \ s(a)=a-a^{-1}, \\
A_b(a)&=\frac{1}{2}\left(\begin{array}{cc} t(a)i & s(a)b \\ s(a)b^{-1} & -t(a)i \\ \end{array}\right), && \ \\
B_1(c)&=\left(\begin{array}{cc} i & c \\ 0 & -i \end{array}\right), &&
B_2(c)=\left(\begin{array}{cc} i & 0 \\ c & -i \end{array}\right).
\end{align}
Let $A=A_1(1)$ and $B_\ell=B_\ell(1)$ for $\ell=1,2$, i.e.,
\begin{align}
A=\left(\begin{array}{cc} i & 0 \\ 0 & -i \end{array}\right), \qquad
B_1=\left(\begin{array}{cc} i & 1 \\ 0 & -i \end{array}\right), \qquad
B_2=\left(\begin{array}{cc} i & 0 \\ 1 & -i \end{array}\right).
\end{align}

As can be verified,
\begin{align}
{\rm tr}(A_1(a_1)A_c(a_2))&=\frac{1}{4}s(a_1)s(a_2)t(c)-\frac{1}{2}t(a_1)t(a_2)=:\gamma_{a_1,a_2}(c), \label{eq:formula-AA} \\
{\rm tr}(A_c(a)B_\ell)&=\frac{1}{2}s(a)c^{(-1)^\ell}-t(a)=:\delta_{a,\ell}(c). \label{eq:formula-AB}
\end{align}
These two formulas will be useful in Example \ref{eg:2-rational} and the last section.

\begin{rmk}
\rm As a special case of (\ref{eq:formula-AA}), ${\rm tr}(AA_1(a))=-t(a)$. Observe that if $a\notin\{\pm 1\}$ and ${\rm tr}(AX)=-t(a)$, then $X=A_b(a)$ for some $b$.
\end{rmk}

For any $a,b\in\mathbb{C}$, we have
\begin{align}
B_\ell(a).B_\ell(b)=B_\ell(2a-b);  \label{eq:B-B}
\end{align}
in particular, $B_\ell(a).A=B_\ell(2a)$.

For $X,Y\in{\rm SL}^{0}(2,\mathbb{C})$ with ${\rm tr}(XY)=t(a)$, call the pair $(X,Y)$ {\it regular} ({\it R} for short) if $(X,Y)\sim (A,-A_1(a))$, and call it {\it non-regular} ({\it NR} for short) otherwise. It is a simple exercise in linear algebra that $(X,Y)$ is NR if and only if $a\in\{\pm 1\}$ and $X\ne -aY$, in which case $(X,Y)\sim (A,-aB_1)$ or $(X,Y)\sim(A,-aB_2)$;
it is clear that $(A,-aB_1)\not\sim (A,-aB_2)$.
When $(X,Y)\sim (A,-aB_\ell)$, say $(X,Y)$ is NR of {\it type $\ell$} (NR$_\ell$ for short).

For $(a,b)\in\mathbb{C}^\times\times\mathbb{C}^\times$ with $a\notin\{\pm 1\}$, put
\begin{align}
S_{(a,b)}=\frac{1}{s(a)}\left(\begin{array}{cc} s(ab^{-1}) & s(b) \\ -s(b) &  s(ab) \\ \end{array}\right).
\end{align}
Observe that
\begin{align}
S_{(a,b)}=\pm I&\Leftrightarrow b=\pm 1, \label{eq:S1} \\
S_{(a,b)}=S_{(a',b')}&\Leftrightarrow(a,b)\sim(a',b'), \label{eq:S2} \\
S_{(a,b)}S_{(a,b')}&=S_{(a,bb')}. \label{eq:S3}
\end{align}

Let
\begin{align}
\mathcal{U}=\mathbb{C}^\times\times\mathbb{C}^\times/(a,b)\sim(a^{-1},b^{-1}).
\end{align}
Denote the equivalence class of $(a,b)$ by $[a,b]$. From (\ref{eq:S2}) we see that $S_{[a,b]}$ can be defined without ambiguity.
Also well-defined is the map
\begin{align}
t:\mathcal{U}\to\mathbb{C}^2, \qquad [a,b]\mapsto (t(a),t(b)).
\end{align}

For $c\in\{\pm 1\}$ and $u\in\mathbb{C}$, put
\begin{align}
C_c(u)=\left(\begin{array}{cc} 1+u & -cu \\ cu & 1-u \\ \end{array}\right).
\end{align}
It is easy to see that for any $u_1,u_2$,
\begin{align}
C_c(u_1)C_c(u_2)=C_c(u_1+u_2), \label{eq:C}
\end{align}
and for any $a,b,u\in\mathbb{C}$,
\begin{align}
(B_\ell(a),-cB_\ell(b))C_c(u)=(B_\ell(a+ua-ub), -cB_\ell(b+ua-ub)). \label{eq:C-B}
\end{align}

\section{Representations of tangles}

In a tangle $T$, each arc together with a choice of direction is called a {\it directed arc}; let $\mathcal{D}(T)$ denote the set of directed arcs of $T$. For $x\in\mathcal{D}(T)$, let $x^{-1}$ denote the same underlying arc of $x$ with direction reversed.

Given a 2-tangle $T$.
A {\it (trace-free ${\rm SL}(2,\mathbb{C})$-)representation} of $T$ is a homomorphism $\pi_1(B^3-T)\to{\rm SL}(2,\mathbb{C})$ sending each meridian to an element of ${\rm SL}^{0}(2,\mathbb{C})$.
In virtue of Wirtinger presentation, it is the same as a map $\rho:\mathcal{D}(T)\to {\rm SL}^{0}(2,\mathbb{C})$ such that $\rho(x^{-1})=\rho(x)^{-1}=-\rho(x)$ for each $x$ and
\begin{align}
\rho(z)=\rho(x)\rho(y)\rho(x)^{-1}=-{\rm tr}(\rho(x)\rho(y))\cdot\rho(x)-\rho(y) \label{eq:rep}
\end{align}
for all $x,y,z$ that are placed as in Figure \ref{fig:crossing} (a) or (b).
\begin{figure} [h]
  \centering
  \includegraphics[width=0.62\textwidth]{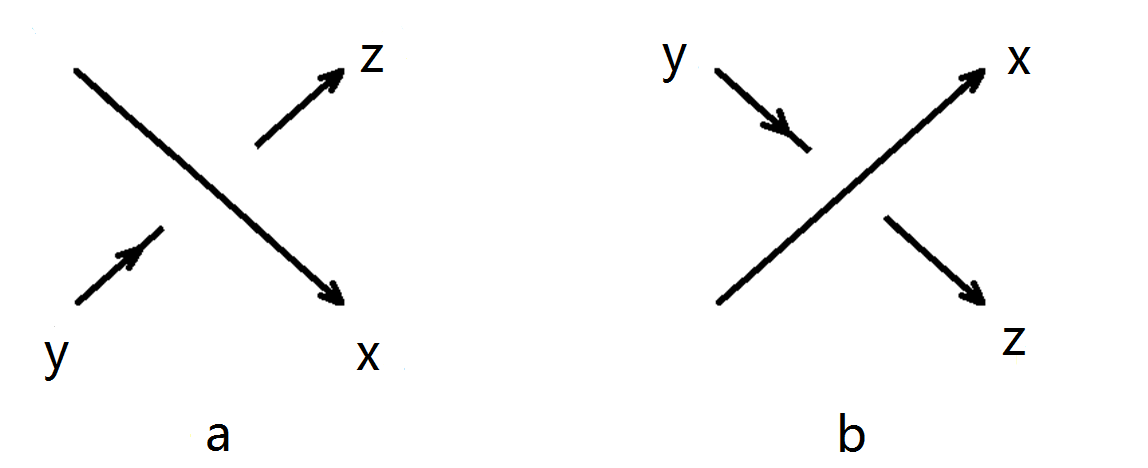}\\
  \caption{Three arcs forming a crossing} \label{fig:crossing}
\end{figure}
To denote such a map, we choose for each arc a direction and label it with an element of ${\rm SL}^{0}(2,\mathbb{C})$.
Let $\mathcal{R}(T)$ denote the set of representations of a tangle $T$.

\begin{rmk} \label{rmk:involution}
\rm Observe that, for each $\rho\in\mathcal{R}(T)$, the function
\begin{align}
\mathcal{D}(T)\to{\rm SL}^0(2,\mathbb{C}), \qquad x\mapsto \rho(x)^t
\end{align}
($X^t$ denotes the transpose of $X$) is also a representation. This defines an involution on $\mathcal{R}(T)$.
\end{rmk}

Let $T^{{\rm nw}}, T^{{\rm ne}}, T^{{\rm sw}}, T^{{\rm se}}$ denote the directed arcs shown in Figure \ref{fig:T}.
Given $\rho\in\mathcal{R}(T)$, let
\begin{align}
\rho^{{\rm nw}}=\rho(T^{{\rm nw}}), \quad \rho^{{\rm sw}}=\rho(T^{{\rm sw}}), \quad
&\rho^{{\rm ne}}=\rho(T^{{\rm ne}}), \quad \rho^{{\rm se}}=\rho(T^{{\rm se}}), \\
\rho^w=(\rho^{\rm nw},\rho^{\rm sw}), \quad \rho^e=(\rho^{\rm ne},\rho^{\rm se}), \quad
&\rho^n=(\rho^{\rm nw},\rho^{\rm ne}), \quad \rho^s=(\rho^{\rm sw},\rho^{\rm se}), \\
{\rm tr}_{v}(\rho)={\rm tr}(\rho^{{\rm nw}}\rho^{{\rm sw}})&={\rm tr}(\rho^{{\rm ne}}\rho^{{\rm se}}), \\
{\rm tr}_{h}(\rho)={\rm tr}(\rho^{{\rm nw}}\rho^{{\rm ne}})&={\rm tr}(\rho^{{\rm sw}}\rho^{{\rm se}}).
\end{align}
Say that $\rho^n$ is R (resp. NR$_\ell$) if $(\rho^{\rm nw},\rho^{\rm ne})$ is R (resp. NR$_\ell$), and so on.

\begin{figure} [h]
  \centering
  \includegraphics[width=0.3\textwidth]{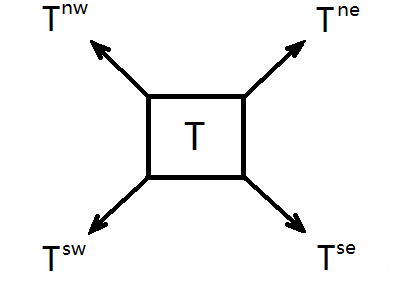}\\
  \caption{A tangle diagram with the ends directed outwards} \label{fig:T}
\end{figure}

Let $\rho$ be a representation of $T$. Call $\rho$ {\it reducible} if all of the elements in its image ${\rm Im}(\rho)$ share an eigenvector, or equivalently, $\rho$ is conjugate to an {\it upper-triangular} representation, which by definition sends each $x\in\mathcal{D}(T)$ to an upper-triangular matrix; it follows that ${\rm tr}(\rho(x)\rho(y))\in\{\pm 2\}$ for any $x,y\in\mathcal{D}(T)$.
In particular, call $\rho$ {\it abelian} if ${\rm Im}(\rho)$ is abelian.
Call $\rho$ {\it irreducible} if it is not reducible.

If $T'$ is a sub-tangle of $T$, then the map
\begin{align}
\rho|_{T'}:\mathcal{D}(T')\hookrightarrow \mathcal{D}(T)\stackrel{\rho}\longrightarrow{\rm SL}^0(2,\mathbb{C})
\end{align}
is a representation, called the {\it restriction} of $\rho$ on $T'$.

Given $\rho_j\in\mathcal{R}(T_j), j=1,2$. If $\rho_2^w=-\rho_1^e$, meaning $\rho_2^{\rm nw}=-\rho_1^{\rm ne}$, $\rho_2^{\rm sw}=-\rho_1^{\rm se}$, then we can ``glue" $\rho_1$ and $\rho_2$ to obtain a representation of $T_1+T_2$, denoted by $\rho_1+\rho_2$. Similarly, if $\rho_2^n=-\rho_1^s$, then we can define $\rho_1\ast\rho_2\in\mathcal{R}(T_1\ast T_2)$.
Thus there are partial compositions
\begin{align}
+&:\ \mathcal{R}(T_1)\times_h\mathcal{R}(T_2)\to\mathcal{R}(T_1+T_2), && (\rho_1,\rho_2)\mapsto \rho_1+\rho_2, \\
\ast&:\ \mathcal{R}(T_1)\times_v\mathcal{R}(T_2)\to\mathcal{R}(T_1\ast T_2), && (\rho_1,\rho_2)\mapsto \rho_1\ast\rho_2,
\end{align}
where
\begin{align}
\mathcal{R}(T_1)\times_h\mathcal{R}(T_2)&=\{(\rho_1,\rho_2)\in\mathcal{R}(T_1)\times\mathcal{R}(T_1)\colon \rho_2^w=-\rho_1^e\}, \\
\mathcal{R}(T_1)\times_v\mathcal{R}(T_2)&=\{(\rho_1,\rho_2)\in\mathcal{R}(T_1)\times\mathcal{R}(T_1)\colon \rho_2^n=-\rho_1^s\}.
\end{align}

\begin{exmp} \label{exmp:pm1}
\rm Representations of $[\pm 1]$ are easy to describe.
If $\rho$ is a representation of $[1]$ (resp. $[-1]$), with $\rho^{w}=(X,Y)$ and ${\rm tr}(XY)=t(a)$, then
$\rho^e=(-t(a)X-Y,X)$ (resp. $\rho^e=(Y,-t(a)Y-X)$).
Note that
\begin{enumerate}
  \item[\rm(a)] $\rho$ is irreducible if and only if $a\notin\{\pm 1\}$;
  \item[\rm(b)] when $a\in\{\pm 1\}$,
      \begin{align*}
      \rho^w\ \text{is\ NR}\ \Leftrightarrow\ \rho^e\ \text{is\ NR}\ \Leftrightarrow\ Y\ne-aX\ \Leftrightarrow\ \rho^n\ \text{is\ NR}\ \Leftrightarrow\ \rho^s\text{\ is\ NR}.
      \end{align*}
\end{enumerate}

\end{exmp}

\begin{figure} [h]
  \centering
  \includegraphics[width=0.6\textwidth]{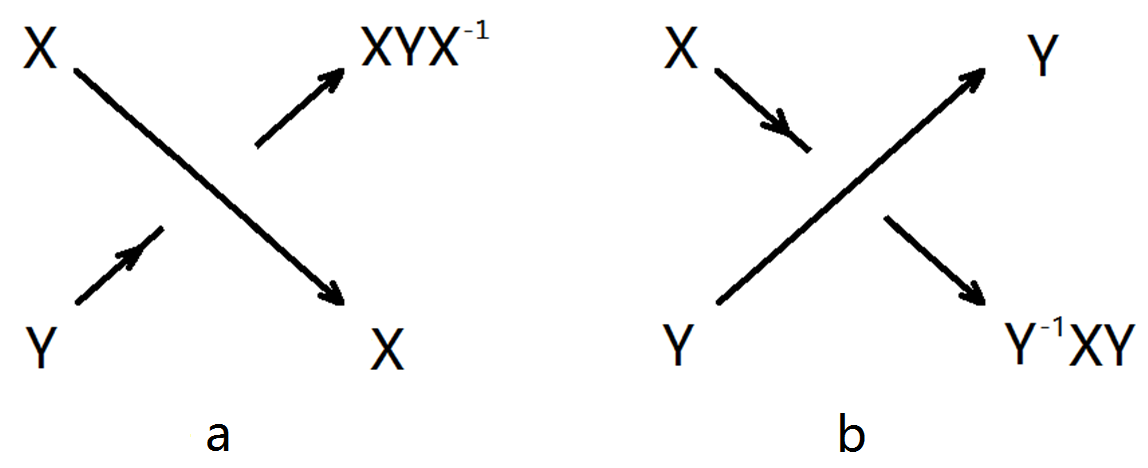} \\
  \caption{(a) a representation of $[1]$;\ (b) a representation of $[-1]$} \label{fig:rep-pm1}
\end{figure}

\section{Representation spaces of arborescent tangles}

We separately investigate reducible representations.

\begin{prop} \label{prop:reducible}
Suppose $T\in\mathcal{T}_{\rm ar}$, with $f(T)=f$; suppose $\rho\in\mathcal{R}(T)$ is reducible, so that ${\rm tr}_v(\rho)=2a,{\rm tr}_h(\rho)=2b$ for some $a,b\in\{\pm 1\}$. Then
\begin{itemize}
  \item[\rm(i)] $\rho^e=-b\rho^wC_a(f)$ if $f\ne\infty$,
  \item[\rm(ii)] $\rho^s=-a\rho^nC_b(f^{-1})$ if $f\ne 0$.
\end{itemize}
\end{prop}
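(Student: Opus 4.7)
The plan is structural induction on the construction of $T\in\mathcal{T}_{\rm ar}$ from copies of $[\pm 1]$ via $+$ and $\ast$. A useful preliminary observation: when $f\notin\{0,\infty\}$, formulas (i) and (ii) are algebraically equivalent, since solving the $2\times 2$ system in (i) for $(\rho^{\rm sw},\rho^{\rm se})$ in terms of $(\rho^{\rm nw},\rho^{\rm ne})$ and using $a^{2}=b^{2}=1$ reproduces (ii) exactly. Hence it suffices to prove (i) when $T$ is built by a horizontal composition and (ii) when built by a vertical one, converting at the end.

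For the base case $T=[\pm 1]$, reducibility of $\rho$ forces ${\rm tr}(XY)\in\{\pm 2\}$, hence $a\in\{\pm 1\}$. From Example \ref{exmp:pm1} $\rho^e$ is given explicitly, and a direct check shows $b=a$ for a single crossing and matches $\rho^e$ against $-b\rho^w C_a(\pm 1)$ entry-by-entry.

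For the horizontal step $T=T_1+T_2$ with $f=f_1+f_2\ne\infty$, restrict $\rho$ to obtain reducible $\rho_i\in\mathcal{R}(T_i)$ satisfying $\rho_2^w=-\rho_1^e$. First verify the sign-parameter propagation: $a_1=a_2=a$ follows from $\rho_2^{\rm nw}\rho_2^{\rm sw}=(-\rho_1^{\rm ne})(-\rho_1^{\rm se})=\rho_1^{\rm ne}\rho_1^{\rm se}$, while the less immediate identity $b=b_1 b_2$ follows from the reducible-case trace identity
\begin{align*}
{\rm tr}(XZ)=-\tfrac{1}{2}{\rm tr}(XY)\,{\rm tr}(YZ),
\end{align*}
which holds whenever $X,Y,Z$ share an eigenvector (easy check in an upper-triangular basis), applied with $Y=\rho_1^{\rm ne}=-\rho_2^{\rm nw}$. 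The rest is a clean chain:
\begin{align*}
\rho^e=\rho_2^e=-b_2\rho_2^w C_a(f_2)=b_2\rho_1^e C_a(f_2)=-b_1 b_2\,\rho^w C_a(f_1)C_a(f_2)=-b\,\rho^w C_a(f),
\end{align*}
the last step by (\ref{eq:C}).

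The vertical step $T=T_1\ast T_2$ is dual: using (ii) for $T_1,T_2$, verify $b_1=b_2=b$ and $a_1 a_2=a$ by the symmetric arguments, then apply $C_b(f_1^{-1})C_b(f_2^{-1})=C_b(f^{-1})$ to obtain (ii) for $T$, and deduce (i) from the preliminary equivalence whenever $f\ne\infty$. The main obstacle is the sign-parameter bookkeeping across compositions ($b=b_1 b_2$ horizontally and its dual $a=a_1 a_2$ vertically), together with the boundary cases $f\in\{0,\infty\}$, which correspond to tangles isotopic to $[0]$ or $[\infty]$ and must be handled by direct inspection since the proposition treats them asymmetrically.
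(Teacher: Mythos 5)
Your overall strategy is the paper's: structural induction on the build-up of $T$ from $[\pm 1]$, chaining the matrices $C_a(f_1)C_a(f_2)=C_a(f_1+f_2)$ across a horizontal composition (and dually $C_b(f_1^{-1})C_b(f_2^{-1})$ across a vertical one), and recovering the other formula by inverting the $2\times 2$ system when $f\notin\{0,\infty\}$ --- your check of that equivalence is correct, and your derivation of $b=b_1b_2$ via the identity ${\rm tr}(XZ)=-\tfrac12{\rm tr}(XY){\rm tr}(YZ)$ for trace-free matrices sharing an eigenvector is a clean alternative to the paper's direct computation of ${\rm tr}(\rho^{\rm ne}\rho^{\rm nw})$ from the already-derived formula $\rho^e=-b'\rho^wC_a(f)$. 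Either works.

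The genuine gap is your treatment of the residual boundary cases, namely statement (ii) for $T=T_1+T_2$ when $f=\infty$, and dually statement (i) for $T=T_1\ast T_2$ when $f=0$. (These are the only two that escape your conversion trick: e.g.\ for $T_1+T_2$ with $f=0$ the chain still gives $\rho^e=-b\rho^wC_a(0)=-b\rho^w$, so nothing extra is needed there.) You dispose of them by asserting that a tangle with fraction $0$ or $\infty$ is isotopic to $[0]$ or $[\infty]$; that is false. The fraction is a complete invariant only for \emph{rational} tangles (Theorem 3 of \cite{KL04}), not for arborescent tangles, so an arborescent tangle with $f=\infty$ can be far from $[\infty]$, and ``direct inspection'' of these cases is not available. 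What is actually needed, and what the paper supplies, is a further case split inside the induction: if $f(T_1+T_2)=\infty$ then $f_1=\infty$ or $f_2=\infty$; when $f_j=\infty$ the inductive hypothesis (ii) for $T_j$ gives $\rho_j^s=-a\rho_j^nC_{b_j}(0)$, i.e.\ $\rho_j^{\rm sw}=-a\rho_j^{\rm nw}$ and $\rho_j^{\rm se}=-a\rho_j^{\rm ne}$, while when $f_{j'}\ne\infty$ the hypothesis (i) for $T_{j'}$ forces $\rho_{j'}^{\rm se}=-a\rho_{j'}^{\rm ne}$ by reading off the second column of $C_a(f_{j'})$; combining the subcases yields $\rho^s=-a\rho^n=-a\rho^nC_b(0)$ as required. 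You need to insert this argument (and its mirror for $T_1\ast T_2$ with $f=0$) to close the proof.
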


\begin{proof}
Referring to (b) in Example \ref{exmp:pm1}, it is easy to check (i), (ii) for $[\pm 1]$. Suppose the conclusion is true for $T_1, T_2$, we shall prove it for $T=T_1+T_2$; the proof for $T=T_1\ast T_2$ is similar.  Let $f_j=f(T_j)$, then $f=f_1+f_2$ by (\ref{eq:f1}). Let $\rho_j=\rho|_{T_j}$, then $\rho_1,\rho_2$ are both reducible. Clearly, ${\rm tr}_v(\rho_1)={\rm tr}_v(\rho_2)=2a$; suppose ${\rm tr}_h(\rho_j)=2b_j$ with $b_j\in\{\pm 1\}$.

If $f\ne\infty$, then $f_1,f_2\ne\infty$, so $\rho_j^e=-b_j\rho_j^wC_{a}(f_j)$, $j=1,2$. Hence
\begin{align*}
\rho^e&=\rho_2^e=-b_2\rho_2^wC_a(f_2)=b_2\rho_1^eC_a(f_2)=-b_1b_2\rho_1^wC_a(f_1)C_a(f_2) \\
&=-b_1b_2\rho^wC_a(f_1+f_2)=-b'\rho^wC_a(f), \qquad \text{with} \qquad b'=b_1b_2,
\end{align*}
where (\ref{eq:C}) is used. Computing directly, we obtain
\begin{align}
{\rm tr}(\rho^{\rm ne}\rho^{\rm nw})={\rm tr}(-b'((1+f)\rho^{\rm nw}+af\rho^{\rm sw})\rho^{\rm nw})=b'(2(1+f)-2a^2f)=2b',
\label{eq:deduce-beta}
\end{align}
which shows that actually $b=b'$. Hence (i) is true.

If $f\ne 0,\infty$, then writing $\rho^{\rm sw}=-a\rho^{\rm nw}+X$, so that
$$\rho^e=-b\rho^wC_a(f)=-b(\rho^{\rm nw}+afX,-a\rho^{\rm nw}+(1-f)X),$$
we obtain $\rho^s=-a\rho^nC_b(f^{-1})$, i.e., (ii) is true in the case $f\ne\infty$.

If $f=\infty$, then $f_1=\infty$ or $f_2=\infty$.
\begin{itemize}
  \item If $f_1=f_2=\infty$, then $\rho_j^s=-a\rho_j^nC_{b_j}(0)$, so that $\rho_1^{\rm sw}=-a\rho_1^{\rm nw}$ and $\rho_2^{\rm se}=-a\rho_2^{\rm ne}$, i.e., $\rho^s=-a\rho^nC_a(0)$;
  \item if $f_1=\infty$ and $f_2\ne\infty$, then $\rho_1^s=-a\rho_1^n$ and $\rho_2^e=-b_2\rho_2^wC_a(f_2)$, which implies $\rho_2^{\rm se}=-a\rho_2^{\rm ne}$, hence also $\rho^s=-a\rho^nC_a(0)$;
  \item if $f_1\ne \infty$ and $f_2=\infty$, the proof is similar.
\end{itemize}
Thus (ii) is true in the case $f=\infty$.
\end{proof}

Now state and prove our first main result.

\begin{thm} \label{thm:key}
Let $T\in\mathcal{T}_{\rm ar}$, with $f(T)=f$, and let $\rho\in\mathcal{R}(T)$.
\begin{enumerate}
  \item[\rm(i)] Let $\{\alpha,\beta\}=\{e,w\}$. If $\rho^{\alpha}$ is NR$_\ell$, then ${\rm tr}_v(\rho)=2a$, ${\rm tr}_h(\rho)=2b$ with $a,b\in\{\pm 1\}$, $\rho^{\beta}=-b\rho^{\alpha}C_a(u)$ for some $u\in\mathbb{Q}$, and $\rho^{\beta}$ is also NR$_\ell$; moreover, $u\ne 0$ if and only if $\rho^n, \rho^s$ are NR$_\ell$.
  \item[\rm(ii)] Let $\{\alpha,\beta\}=\{n,s\}$. If $\rho^{\alpha}$ is NR$_\ell$, then ${\rm tr}_v(\rho)=2a$, ${\rm tr}_h(\rho)=2b$ with $a,b\in\{\pm 1\}$, $\rho^{\beta}=-a\rho^{\alpha}C_b(v)$ for some $v\in\mathbb{Q}$, and $\rho^{\beta}$ is also NR$_\ell$; moreover, $v\ne 0$ if and only if $\rho^w, \rho^e$ are NR$_\ell$.
  \item[\rm(iii)] If $t_v\notin\{\pm 2\}$, then $\rho^n,\rho^{w}, \rho^s, \rho^e$ are all R, and there exists a unique $[a,b]\in\mathcal{U}$ with $t([a,b])=({\rm tr}_v(\rho),{\rm tr}_h(\rho))$ and $\rho^e=-\rho^wS_{[a,b]}$.
  \item[\rm(iv)] If $t_h\notin\{\pm 2\}$, then $\rho^n,\rho^{w}, \rho^s, \rho^e$ are all R, and there exists a unique $[a,b]\in\mathcal{U}$ with $t([a,b])=({\rm tr}_v(\rho),{\rm tr}_h(\rho))$ and $\rho^s=-\rho^nS_{[b,a]}$.
\end{enumerate}

In any case, denote $[a,b]\in\mathcal{U}$ by $\chi(\rho)$.
\end{thm}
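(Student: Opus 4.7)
The plan is to induct on the construction of $T$ as an arborescent tangle from $[\pm 1]$ under $+$ and $\ast$. By the symmetry swapping horizontal and vertical composition (which interchanges (i) with (ii) and (iii) with (iv)), it suffices to treat the composite $T = T_1 + T_2$ and prove (i) and (iii); the case $T = T_1 \ast T_2$ together with claims (ii), (iv) are established in parallel.

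For the base case $T = [\pm 1]$, Example \ref{exmp:pm1} gives $\rho^e$ explicitly in terms of $\rho^w = (X, Y)$ with ${\rm tr}(XY) = t(a)$. When $a \notin \{\pm 1\}$ I would verify $\rho^e = -\rho^w S_{[a, a^{\mp 1}]}$ by direct matrix computation using the identity $s(a^2)/s(a) = t(a)$, settling the base for (iii) (and similarly (iv)); when $a \in \{\pm 1\}$, part (b) of the example gives that the four corners are simultaneously R or NR$_\ell$, and matching coordinates against $C_a(u)$ yields $\rho^e = -b\rho^w C_a(\pm 1)$, settling the base for (i) (and (ii)). For the inductive step, write $\rho_j = \rho|_{T_j}$. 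In the NR case of (i) with $\rho^w$ NR$_\ell$, one has $\rho_1^w = \rho^w$ NR$_\ell$, so the inductive hypothesis on $T_1$ gives $\rho_1^e = -b_1 \rho_1^w C_a(u_1)$ with NR structure; the gluing $\rho_2^w = -\rho_1^e$ propagates NR-ness to $T_2$, and the inductive hypothesis on $T_2$ gives $\rho_2^e = -b_2 \rho_2^w C_a(u_2)$. Combining via \eqref{eq:C},
\begin{align*}
\rho^e = \rho_2^e = -b_1 b_2 \rho^w C_a(u_1 + u_2),
\end{align*}
so (i) holds for $T$ with $b = b_1 b_2$ and $u = u_1 + u_2 \in \mathbb{Q}$; identifying $b$ with ${\rm tr}_h(\rho)/2$ proceeds exactly as in \eqref{eq:deduce-beta}. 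For the ``moreover" part, I would place $\rho^w$ in canonical form $(B_\ell(0), -aB_\ell(1))$ (using $A = B_\ell(0)$), apply \eqref{eq:C-B} to read off $\rho^{\rm ne} = -bB_\ell(-u)$, and inspect $\rho^n = (B_\ell(0), -bB_\ell(-u))$, which is R precisely when $u = 0$ and NR$_\ell$ otherwise. In the R case of (iii), when ${\rm tr}_v(\rho) \notin \{\pm 2\}$, the inductive hypothesis gives $\rho_j^e = -\rho_j^w S_{[a, b_j]}$ for $j = 1, 2$; the multiplicativity \eqref{eq:S3} then yields
\begin{align*}
\rho^e = -\rho^w S_{[a, b_1 b_2]}.
\end{align*}
Uniqueness of $[a, b] \in \mathcal{U}$ follows from the trace conditions $t([a, b]) = ({\rm tr}_v(\rho), {\rm tr}_h(\rho))$ together with the injectivity of $[a, b] \mapsto S_{[a, b]}$ supplied by \eqref{eq:S2}.

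The chief subtlety will be NR$_\ell$ type bookkeeping through the inductive step — namely, checking that the subscript $\ell$ is preserved both by the sign flip in the gluing $\rho_2^w = -\rho_1^e$ and by the action of $C_a(u)$ on pairs of the form $(B_\ell(a_0), -aB_\ell(b_0))$; this calls for the canonical-form reduction together with a careful application of \eqref{eq:C-B}. The ``moreover" assertion of (i), (ii), which hinges on the sharp dichotomy $u = 0$ versus $u \ne 0$, is the most delicate component and will require this normal-form machinery in full.
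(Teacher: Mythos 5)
Your treatment of (i) and (iii) for $T=T_1+T_2$ is essentially the paper's argument: the same structural induction, the same use of (\ref{eq:C}) and (\ref{eq:S3}) to compose the relations $\rho_j^e=-b_j\rho_j^wC_a(u_j)$, resp.\ $\rho_j^e=-\rho_j^wS_{[a,b_j]}$, across the gluing $\rho_2^w=-\rho_1^e$, the same trace computation as (\ref{eq:deduce-beta}) to identify $b=b_1b_2$ with ${\rm tr}_h(\rho)/2$, and the same normal form $(A,-aB_\ell)$ with (\ref{eq:C-B}) for the ``moreover'' dichotomy. The genuine gap is how you dispose of (ii) and (iv). The $90^\circ$ rotation does interchange (i)$\leftrightarrow$(ii) and (iii)$\leftrightarrow$(iv), but it simultaneously interchanges $+$ with $\ast$: statements (ii),(iv) for $T_1+T_2$ become statements (i),(iii) for a \emph{vertical} composite, and the ``parallel'' argument you invoke for vertical composites proves (ii),(iv) there, not (i),(iii) --- for $T_1\ast T_2$ it is $\rho^n,\rho^s$ that lie in single sub-tangles, while $\rho^w=(\rho_1^{\rm nw},\rho_2^{\rm sw})$ straddles both, so no inductive hypothesis applies to it directly. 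The two halves you omit are thus exchanged by the symmetry and neither is ever proved; the reduction is circular. The paper closes this with separate arguments that cannot be skipped: (iv) is deduced from (i) and (iii) for the \emph{same} $T$ by writing $\rho^{\rm sw}=-a\rho^{\rm nw}+s(a)X$ and converting $\rho^e=-\rho^wS_{[a,b]}$ into $\rho^s=-a\rho^nS_{[b,a]}$ (plus the degenerate case ${\rm tr}_v(\rho)=2a$, where $\rho^s=-a\rho^n$); and (ii) requires a case split on whether the intermediate pairs $\rho_1^w,\rho_1^e,\rho_2^e$ are all R (giving $\rho^s=-a\rho^nC_b(0)$) or all NR (forcing $u\ne0$ and inverting $\rho^e=-b\rho^wC_a(u)$ into $\rho^s=-a\rho^nC_b(u^{-1})$). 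Your proof needs these arguments, or their mirror images for (i),(iii) on vertical composites.

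A further caution on what you call the ``chief subtlety'': it genuinely is one, and it cannot be left as a deferral. Negating both members of an NR$_\ell$ pair conjugates it to the \emph{opposite} type (for instance $(-A,aB_1)\sim(A,-aB_2)$ via an anti-diagonal conjugator), so the gluing sign in $\rho_2^w=-\rho_1^e$ and the overall factor $-b$ in $\rho^e=-b(B_\ell(-u),-aB_\ell(1-u))$ both interact nontrivially with the subscript $\ell$. Establishing the type-preservation claim of (i) therefore requires an explicit normal-form computation tracking these signs, not merely ``a careful application of (\ref{eq:C-B})''; as written, this part of your plan does not yet constitute a proof.
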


\begin{proof}
For $T\in\mathcal{T}_{\rm ar}$, let $H_{{\rm k}}(T)$ denote the statement (k) for $T$.
It is easy to verify $H_{{\rm k}}([\pm 1]), {\rm k}\in\{{\rm i},\ldots,{\rm iv}\}$.
Suppose $H_{{\rm k}}(T_j), {\rm k}\in\{{\rm i},\ldots,{\rm iv}\}, j\in\{1,2\}$ have been established. We shall prove $H_{{\rm k}}(T), {\rm k}\in\{{\rm i},\ldots,{\rm iv}\}$ for $T=T_1+T_2$; the proof for $T_1\ast T_2$ is similar.
Let $\rho_j=\rho|_{T_j}$.

(i) Assume $\alpha=w, \beta=e$; the proof for the other case is similar.

For $j=1,2$, by $H_{\rm i}(T_j)$, ${\rm tr}_v(\rho_j)=2a, {\rm tr}_h(\rho_j)=2b_j$ with $a,b_j\in\{\pm 1\}$, and there exists $u_j\in\mathbb{Q}$, such that $\rho_j^e=-b_j\rho_j^wC_a(u_j)$. Then due to $\rho_2^w=-\rho_1^e$ and (\ref{eq:C}), we have
$$\rho^e=-(b_1b_2)\rho_1^wC_a(u_1)C_a(u_2)=-b'\rho^wC_a(u),$$
with $u=u_1+u_2$, $b'=b_1b_2$.
Computing similarly as in (\ref{eq:deduce-beta}),
\begin{align*}
{\rm tr}(\rho^{\rm ne}\rho^{\rm nw})={\rm tr}(-b((1+u)\rho^{\rm nw}+au\rho^{\rm sw})\rho^{\rm nw})=b'(2(1+u)-2a^2u)=2b'.
\end{align*}
Hence $b=b'$, and $\rho^e=-b'\rho^wC_a(u)$.

Up to conjugacy we may assume $\rho^w=(A,-aB_\ell)$, then
$$\rho^e=-b(A,-aB_\ell) C_a(u)=-b(B_\ell(-u),-aB_\ell(1-u)).$$
Since by (\ref{eq:B-B}), $B_\ell(-u)=B_\ell(-u/2).A$ and $B_\ell(1-u)=B_\ell(-u/2).B_\ell(-1)$, we see that $\rho^e$ is NR$_\ell$. Obviously,
$u\ne 0$ if and only if $\rho^n,\rho^s$ are NR$_\ell$.

(iii) If ${\rm tr}_v(\rho)\notin\{\pm 2\}$, then $\rho^w$, $\rho^e$ are R; by $H_{\rm iii}(T_j)$, there exists a unique $[a_j,b_j]\in\mathcal{U}$ such that $t([a_j,b_j])=({\rm tr}_v(\rho_j),{\rm tr}_h(\rho_j))$ and $\rho_j^e=-\rho_j^wS_{[a_j,b_j]}$. Replacing $a_2$ by its inverse if necessary, we may assume $a_2=a_1=:a$. Then
\begin{align}
\rho^e=-\rho_2^wS_{[a,b_2]}=\rho_1^eS_{[a,b_2]}=-\rho_1^wS_{[a,b_1]}S_{[a,b_2]}=-\rho^wS_{[a,b]}, \label{eq:e-w}
\end{align}
with $b=b_1b_2$.
By computation, ${\rm tr}(\rho^{\rm nw}\rho^{\rm ne})=t(b)$.
If $b\in\{\pm 1\}$, then by (\ref{eq:S1}), $\rho^e=-b\rho^w$. Thus $\rho^n,\rho^s$ are R.

(iv) Suppose ${\rm tr}_h(\rho)\notin\{\pm 2\}$. Then $\rho^n, \rho^s$ are R, and by (i), $\rho^w, \rho^e$ are R. If ${\rm tr}_v(\rho)=2a$ with $a\in\{\pm 1\}$, then $\rho^s=-a\rho^n=-a\rho^nS_{[b,a]}$, where $b$ is any of the two roots of $b+b^{-1}={\rm tr}_h(\rho)$, (note that $[a,b]=[a,b^{-1}]$). If ${\rm tr}_v(\rho)\notin\{\pm 2\}$, then by (iii) there exists a unique $[a,b]\in\mathcal{U}$ such that $t([a,b])=({\rm tr}_v(\rho),{\rm tr}_h(\rho))$ and $\rho^e=-\rho^wS_{[a,b]}$; writing $\rho^{\rm sw}=-a\rho^{\rm nw}+s(a)X$, we have
$$\rho^e=(-b^{-1}\rho^{\rm nw}-s(b)X,ab^{-1}\rho^{\rm nw}+s(ba^{-1})X),$$
implying $\rho^s=-a\rho^nS_{[b,a]}$ straightforward.

(ii) Assume $\rho^n$ is NR$_\ell$; the proof for the other case is similar. By (iii), ${\rm tr}_v(\rho)=2a$ with $a\in\{\pm 2\}$.

If $\rho_1^w$, $\rho_1^e$, $\rho_2^e$ are all R, then $\rho^s=-a\rho^n=-a\rho^n C_b(0)$, and $\rho^s$ is NR$_\ell$.

If at least one of $\rho_1^w$, $\rho_1^e$, $\rho_2^e$ is NR$_{\ell'}$, then by $H_{\rm i}(T_1), H_{\rm i}(T_2)$, all of them are NR$_{\ell'}$. By (i), ${\rm tr}_h(\rho)=2b$ with $b\in\{\pm 1\}$ and $\rho^e=-b\rho^wC_a(u)$ for some $u\in\mathbb{Q}$. Since $C_a(0)=I$ but $\rho^n$ is NR, we have $u\ne 0$; by (i) again, $\ell'=\ell$ and $\rho^s$ is also NR$_\ell$.
Similarly as in the proof of Proposition \ref{prop:reducible}, we can deduce $\rho^s=-a\rho^nC_b(u^{-1})$ from $\rho^e=-b\rho^wC_a(u)$.
\end{proof}

For $T\in\mathcal{T}_{\rm ar}$ and $a,b\in\mathbb{C}^\times$, let
\begin{align}
\mathcal{R}^{a,b}(T)=\{\rho\in\mathcal{R}(T)\colon ({\rm tr}_v(\rho),{\rm tr}_h(\rho))=(t(a),t(b))\}.
\end{align}

For $\rho\in\mathcal{R}^{a,b}(T)$,
\begin{itemize}
  \item say $\rho$ is {\it (of type) VR$_0$} if $a\in\{\pm 1\}$ and $\rho^w$ is R, and write $\rho\in{\rm VR}_0$;
  \item say $\rho$ is {\it (of type) VR$_1$} if $a\notin\{\pm 1\}$, and write $\rho\in{\rm VR}_1$;
  \item say $\rho$ is {\it (of type) VNR$_\ell$} if $\rho^w$ is NR$_\ell$, and write $\rho\in{\rm VNR}_\ell$.
\end{itemize}
The three types are called {\it V-types}. Similarly, we have notions of {\it H-types}.

For $a,b\in\{\pm 1\}$, $\ell\in\{1,2\}$ and $u,v\in\mathbb{Q}$, let
\begin{align}
\mathcal{R}^{a,b}_{\rm {vn}(\ell,u)}(T)&=\{\rho\in\mathcal{R}^{a,b}(T)\colon\rho\in{\rm VNR}_\ell,\ \rho^e=-b\rho^wC_a(u)\}; \\
\mathcal{R}^{a,b}_{{\rm hn}(\ell,v)}(T)&=\{\rho\in\mathcal{R}^{a,b}(T)\colon\rho\in{\rm HNR}_\ell,\ \rho^s=-a\rho^nC_b(v)\}.
\end{align}
The proof of Theorem \ref{thm:key} clarifies that, if $u\ne 0$, then
$$\mathcal{R}^{a,b}_{{\rm vn}(\ell,u)}(T)=\mathcal{R}^{a,b}_{{\rm hn}(\ell,u^{-1})}(T).$$
Note that the involution defined in Remark \ref{rmk:involution} interchanges $\mathcal{R}^{a,b}_{\rm {vn}(\ell,u)}(T)$  with $\mathcal{R}^{a,b}_{\rm {vn}(\ell',u)}(T)$ for $\{\ell,\ell'\}=\{1,2\}$.

For $a,b\in\{\pm1\}$ and $U,V\in{\rm SL}^0(2,\mathbb{C})$, set
\begin{align}
\mathcal{R}_{{\rm vr}(a)}^{U,V}(T)&=\{\rho\in\mathcal{R}(T)\colon \rho\in{\rm VR}_0, \ \rho^n=(U,V)\}, \\
\mathcal{R}_{{\rm hr}(b)}^{U,V}(T)&=\{\rho\in\mathcal{R}(T)\colon \rho\in{\rm HR}_0, \ \rho^w=(U,V)\}.
\end{align}

From the proof of Theorem we see that the partial operation $+$ (defined in Section 3) restricts to
\begin{align*}
\mathcal{R}^{a,b_1}(T_1)&\times_h\mathcal{R}^{a,b_2}(T_2)\to\mathcal{R}^{a,b_1b_2}(T_1+T_2), &&   a\notin\{\pm 1\}, \\
\mathcal{R}_{{\rm vr}(a)}^{U,W}(T_1)&\times\mathcal{R}_{{\rm vr}(a)}^{-W,V}(T_2)\to\mathcal{R}_{{\rm vr}(a)}^{U,V}(T_1+T_2), && a\in\{\pm 1\}, \\
\mathcal{R}^{a,b_1}_{{\rm vn}(\ell,u_1)}(T_1)&\times_h\mathcal{R}^{a,b_2}_{{\rm vn}(\ell,u_2)}(T_2)\to
\mathcal{R}^{a,b_1b_2}_{{\rm vn}(\ell,u_1+u_2)}(T_1+T_2), && a,b_1,b_2\in\{\pm 1\}.
\end{align*}
The situation for $\ast$ is similar.

Here is our second main result.

\begin{thm} \label{thm:method}
One may inductively determine representations of an arborescent tangle by the following rules:

{\rm (a)} When $a\notin\{\pm 1\}$,
\begin{align}
\mathcal{R}^{a,b}(T_1+T_2)=\bigsqcup\limits_{b_1b_2=b}\mathcal{R}^{a,b_1}(T_1)+\mathcal{R}^{a,b_2}(T_2);   \label{eq:VR1}
\end{align}
when $a\in\{\pm 1\}$,
\begin{align}
\mathcal{R}^{U,V}_{{\rm vr}(a)}(T_1+T_2)&=\bigsqcup\limits_{W}\mathcal{R}^{U,W}_{{\rm vr}(a)}(T_1)+\mathcal{R}^{-W,V}_{{\rm vr}(a)}(T_2),  \label{eq:VR0} \\
\mathcal{R}_{{\rm vn}(\ell,u)}^{a,b}(T_1+T_2)&=\bigsqcup\limits_{u_1+u_2=u\atop b_1b_2=b}
\mathcal{R}_{{\rm vn}(\ell,u_1)}^{a,b_1}(T_1)+\mathcal{R}_{{\rm vn}(\ell,u_2)}^{a,b_2}(T_2).
\end{align}

For $\rho\in\mathcal{R}^{U,V}_{{\rm vr}(a)}(T_1+T_2)$, denoting $\rho_j=\rho|_{T_j}$, there are three cases:
\begin{enumerate}
  \item[\rm(i)] if $\rho\in{\rm HR}_0$, then $b_2\in\{bb_1^{\pm1}\}$ and $\rho_1,\rho_2$ are of the same H-type;
  \item[\rm(ii)] if $\rho\in{\rm HR}_1$, then either $\rho_1\in{\rm HR}_1$, or $\rho_2\in{\rm HR}_1$, or $\rho_j\in{\rm HNR}_{\ell_j}$, $j=1,2$, with $\ell_1\ne\ell_2$;
  \item[\rm(iii)] if $\rho\in{\rm HNR}_{\ell}$, then either $\rho_j\in{\rm HR}_1, \rho_{j'}\notin{\rm HR}_0$ for $j'\ne j$, or $\rho_j\in{\rm HNR}_\ell$,  $\rho_{j'}\notin{\rm HNR}_{\ell'}$ for $j'\ne j$, with $\ell'\ne\ell$.
\end{enumerate}

{\rm(b)} When $b\notin\{\pm 1\}$,
\begin{align}
\mathcal{R}^{a,b}(T_1\ast T_2)=\bigsqcup\limits_{a_1a_2=a}\mathcal{R}^{a_1,b}(T_1)\ast\mathcal{R}^{a_2,b}(T_2);   \label{eq:HR1}
\end{align}
when $b\in\{\pm 1\}$,
\begin{align}
\mathcal{R}_{{\rm hr}(b)}^{U,V}(T_1\ast T_2)&=\bigsqcup\limits_{W}\mathcal{R}_{{\rm hr}(b)}^{U,W}(T_1)\ast
\mathcal{R}_{{\rm hr}(b)}^{-W,V}(T_2),  \label{eq:HR0}\\
\mathcal{R}_{{\rm hn}(\ell,v)}^{a,b}(T_1\ast T_2)&=\bigsqcup\limits_{v_1+v_2=v\atop a_1a_2=a}
\mathcal{R}_{{\rm hn}(\ell,v_1)}^{a_1,b}(T_1)\ast\mathcal{R}_{{\rm hn}(\ell,v_2)}^{a_2,b}(T_2).
\end{align}

For $\rho\in\mathcal{R}_{{\rm hr}(b)}^{U,V}(T_1\ast T_2)$, denoting $\rho_j=\rho|_{T_j}$, there are three cases:
\begin{enumerate}
  \item[\rm(i)] if $\rho\in{\rm VR}_0$, then $a_2\in\{aa_1^{\pm1}\}$ and $\rho_1,\rho_2$ are of the same V-type;
  \item[\rm(ii)] if $\rho\in{\rm VR}_1$, then either $\rho_1\in{\rm VR}_1$, or $\rho_2\in{\rm VR}_1$, or $\rho_j\in{\rm VNR}_{\ell_j}$, $j=1,2$, with $\ell_1\ne\ell_2$;
  \item[\rm(iii)] if $\rho\in{\rm VNR}_{\ell}$, then either $\rho_j\in{\rm VR}_1, \rho_{j'}\notin{\rm VR}_0$ for $j'\ne j$, or $\rho_j\in{\rm VNR}_\ell$,  $\rho_{j'}\notin{\rm VNR}_{\ell'}$ for $j'\ne j$, with $\ell'\ne\ell$.
\end{enumerate}
\end{thm}

\begin{proof}
We only prove (a); the proof for (b) is similar.

The equations (\ref{eq:VR1}) and (\ref{eq:VR0}) are obvious. In the three cases for $\rho\in\mathcal{R}^{U,V}_{{\rm vr}(a)}(T_1+T_2)$,
(i) is obvious, (ii) and (iii) are consequences of the following:
\begin{itemize}
  \item If $\rho_j\in{\rm HR}_0$ then $\rho_1+\rho_2$ has the same H-type as $\rho_{j'}$, $j'\ne j$. This is obvious.
  \item If $\rho_1,\rho_2\in{\rm HNR}_{\ell_j}$, then
        $$\rho_1+\rho_2\in\begin{cases} {\rm HR}_0\sqcup{\rm HNR}_\ell, &\ell_1=\ell_2=\ell, \\ {\rm HR}_1,&\ell_1\ne\ell_2. \end{cases}.$$
        To see this, just assume $\rho_1^{\rm ne}=A$, then $\rho_1^{\rm nw}=-b_1B_{\ell_1}(c_1)$ for some $c_1\ne 0$, and $\rho_2^{\rm nw}=b_2B_{\ell_2}(c_2)$ for some $c_2\ne 0$, so that
        ${\rm tr}(\rho_1^{\rm nw}\rho_2^{\rm ne})\in\{\pm 2\}$ if $\ell_1=\ell_2$ and ${\rm tr}(\rho_1^{\rm nw}\rho_2^{\rm ne})\notin\{\pm 2\}$ if $\ell_1\ne\ell_2$.
\end{itemize}
\end{proof}

\begin{exmp} \label{eg:rational}
\rm

\begin{figure}[h]
  \centering
  \includegraphics[width=0.45\textwidth]{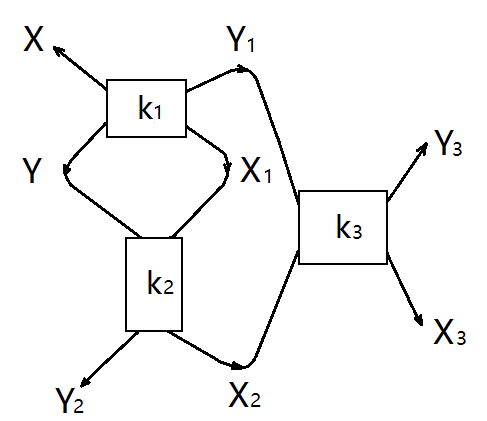}\\
  \caption{A representation of a rational tangle} \label{fig:rational}
\end{figure}

Let $T=[p/q]=[[k_1],\ldots,[k_m]]\ne[0]$.  As illustrated by Figure \ref{fig:rational}, a representation $\rho$ is determined by $(X,Y)$ (called the {\it generating pair}). More precisely, $X_j,Y_j,j=1,\ldots,m$ are all linear combinations of $X,Y$, the coefficients being polynomials in ${\rm tr}(XY)$ that are independent of the type (i.e., H-type and V-type) of $\rho$.
Suppose ${\rm tr}(XY)=t(c)$. Up to conjugacy, $\rho$ is determined by its type and $t(c)$.

If $c\notin\{\pm1\}$, then by induction on $m$, we can show
\begin{align}
\chi(\rho)=[(-1)^{\tilde{q}-1}c^q,(-1)^{\tilde{p}-1}c^p],
\end{align}
where $\tilde{p}/\tilde{q}=[[k_2,\ldots,k_m]]^{(-1)^{m-1}}$ so that $p\tilde{p}-q\tilde{q}=1$; see Section 2 of \cite{Ch16} for more details.
Let $a=(-1)^{\tilde{q}-1}c^q$, $b=(-1)^{\tilde{p}-1}c^p$, so that $[a,b]=\chi(\rho)$, and
\begin{align}
(-a)^p+(-b)^{q}=0.  \label{eq:ab-rational}
\end{align}
Note that $c=b^{\tilde{p}}a^{-\tilde{q}}$, so $a\notin\{\pm 1\}$ or $b\notin\{\pm 1\}$, and $\rho^n$, $\rho^w$, $\rho^s$, $\rho^e$ are all R.

If $c\in\{\pm 1\}$ and $Y=-cX$, then $\rho$ is abelian, so $\rho^n$, $\rho^w$, $\rho^s$, $\rho^e$ are all R.

If $c\in\{\pm 1\}$ and $Y\ne -cX$, then $(X,Y)\sim(A,-cB_\ell)$ with $\ell\in\{\pm 1\}$, and $\rho$ is reducible. Applying Proposition \ref{prop:reducible}, we can successively show that $(X,Y_j)$ is NR$_\ell$ for $j=1,2,\ldots,m$.
Consequently, $\rho^n$, $\rho^w$, $\rho^s$, $\rho^e$ are all NR$_\ell$.
\end{exmp}

\begin{rmk} \label{rmk:rational}
\rm It is worth emphasizing three features of representations $\rho$ of a rational tangle different from $[0]$:
\begin{enumerate}
  \item[\rm(i)] $\rho^n,\rho^w,\rho^s,\rho^e$ are simultaneously R or NR, in which case we simply say that $\rho$ is R or NR, respectively;
  \item[\rm(ii)] the condition (\ref{eq:ab-rational}) is also true when $\rho$ is NR, since the expressions of $X_j,Y_j$ in terms of $X,Y$ and $t(c)$ do not depend on the type of $\rho$;
  \item[\rm(iii)] $\rho$ is determined by $\rho^{\rm nw}, \rho^{\rm ne}, \rho^{\rm sw}, \rho^{\rm se}$.
\end{enumerate}
\end{rmk}

\begin{exmp} \label{eg:2-rational}
\rm Let $T=[p_1/q_1]+[p_2/q_2]$ and $\rho\in\mathcal{R}(T)$. Up to conjugacy we may assume $\rho^{\rm nw}=A$.
Let $\rho_j=\rho|_{[p_j/q_j]}$. Suppose $\chi(\rho)=[a,b]$, and $\chi(\rho_j)=[a,b_j], j=1,2$. Then by (\ref{eq:ab-rational}),
$$(-a)^{p_j}+(-b_j)^{q_j}=0, \qquad j=1,2.$$

If $\rho\in{\rm VR}_1$, then $b_1b_2=b$; up to conjugacy, $\rho^{\rm sw}=-A_1(\alpha)$, and $\rho$ is determined by
$$\rho_1^e=-(A,-aB_\ell)S_{(a,b_1)}, \qquad \rho_2^e=-(A,-aB_\ell)S_{(a,b)}.$$

If $\rho\in{\rm VNR}_\ell$, then $b_1b_2=b$; up to conjugacy, $\rho^{\rm sw}=-aB_\ell$, and $\rho$ is determined by
$$\rho_1^e=-b_1(A,-aB_\ell)C_{a}(p_1/q_1), \qquad \rho_2^e=-b(A,-aB_\ell)C_{a}(p_1/q_1+p_2/q_2).$$

If $\rho\in{\rm VR}_0$, then $\rho^{\rm sw}=-aA$, and $\rho_j^{\rm se}=-a\rho_j^{\rm ne}$, $j=1,2$, so $\rho$ is determined by $\rho_1^{\rm ne}$ and $\rho_2^{\rm ne}$. There are three possibilities:
\begin{enumerate}
  \item[\rm(i)] If $\rho\in{\rm HR}_0$, then $b_2\in\{bb_1^{\pm1}\}$, $\rho_2^{\rm ne}=-bA$, and $\rho_1,\rho_2$ are both R, hence up to conjugacy, $\rho_1^{\rm ne}=-A_1(b_1)$.
  \item[\rm(ii)] If $\rho\in{\rm HR}_1$, then up to conjugacy, $\rho_2^{\rm ne}=-A_1(b)$, and one of the following occurs:
       \begin{itemize}
           \item $\rho_1\in{\rm HR}_1,\rho_2\in{\rm HR}_0$, $b_2\in\{\pm 1\}$, $b=b_1b_2$, and $\rho_1^{\rm ne}=-A_1(b_1)$;
           \item $\rho_1\in{\rm HR}_0,\rho_2\in{\rm HR}_1$, $b_1\in\{\pm 1\}$, $b=b_1b_2$, and $\rho_1^{\rm ne}=-b_1 A$;
           \item $\rho_1\in{\rm HR}_1,\rho_2\in{\rm HR}_1$, $b_1,b_2\notin\{\pm 1\}$; up to conjugacy, $\rho_1^{\rm ne}=-A_c(b_1)$, with $\gamma_{b,b_1}(c)=-t(b_2)$.
       \end{itemize}
  \item[\rm(iii)] If $\rho\in{\rm HNR}_\ell$, then $\rho_1,\rho_2\in{\rm HR}_1$, so that $b_j\notin\{\pm 1\}$; up to conjugacy, $\rho_2^{\rm ne}=-bB_{\ell}$ and $\rho_1^{\rm ne}=-A_c(b_1)$, with $\delta_{b_1,\ell}(c)=-t(b_2)b$.
\end{enumerate}

\end{exmp}

\section{Finding all representations of an arborescent link}

The method is straightforward.
For $T\in\mathcal{T}_{\rm ar}$, a representation of $N(T)$ is the same as $\rho\in\mathcal{R}(T)$ with $\rho^e=-\rho^w$, which is equivalent to
\begin{align}
\begin{cases}
b=1, &\text{if}\  \rho\in\mathcal{R}^{a,b}(T) \ \text{with}\ a\notin\{\pm 1\}, \\
b=1, U=-V, &\text{if}\ \rho\in\mathcal{R}^{U,V}_{{\rm vr}(a)}(T) \ \text{with}\ a\in\{\pm 1\}, \\
b=1,u=0, &\text{if}\ \rho\in\mathcal{R}^{a,b}_{{\rm vn}(\ell,u)}(T);
\end{cases}
\end{align}
a representation of $D(T)$ is the same as $\rho\in\mathcal{R}(T)$ with $\rho^s=-\rho^n$, which is equivalent to
\begin{align}
\begin{cases}
a=1, &\text{if}\ \rho\in\mathcal{R}^{a,b}(T)\ \text{with}\  b\notin\{\pm 1\}, \\
a=1, U=-V, &\text{if}\ \rho\in\mathcal{R}^{U,V}_{{\rm hr}(b)}(T)\ \text{with}\  b\in\{\pm 1\}, \\
a=1,v=0, &\text{if}\ \rho\in\mathcal{R}^{a,b}_{{\rm hn}(\ell,v)}(T).
\end{cases}
\end{align}

Thus the problem is reduced to finding all representations of $T$ with some specified conditions.

\subsection{A class of 3-bridge arborescent links}

3-bridge arborescent links were completely classified by Jang \cite{Ja11}. According to \cite{Ja11} Theorem 1, there are three families of non-Montesinos 3-bridge arborescent links, one of which consists of links of the form $L=N(T)$ with
$$T=([p_1/q_1]\ast[p'_1/q'_1])+[1/k_0]+([p_2/q_2]\ast[p'_2/q'_2]).$$
Figure \ref{fig:3-bridge2} illustrates $L$ together with some directed arcs $x_0,\ldots,x_9$.

Given a representation $\tilde{\rho}$ of $L$, let $X_j=\tilde{\rho}(x_j), j=0,\ldots,9$, and let $\rho$ denote the corresponding representation of $T$. Then $\rho^e=-\rho^w$. By (iii) in Remark \ref{rmk:rational}, to describe $\rho$ (and $\tilde{\rho}$), it suffices to write down the $X_j$'s.
Up to conjugacy, we may assume $X_0=A$.

Let $\rho_0=\rho|_{[1/k_0]}$, and $\rho_j=\rho|_{[p_j/q_j]\ast[p'_j/q'_j]}$, $j=1,2$. Suppose
\begin{align}
\chi(\rho)&=[a,b], \qquad \chi(\rho_0)=[a,b_0], \\
\chi(\rho|_{[p_j/q_j]})&=[a_j,b_j], \quad \chi(\rho|_{[p'_j/q'_j]})=[a'_j,b_j], \quad j=1,2.
\end{align}
Then
\begin{align}
a=(-b_0)^{k_0}; \qquad (-a_j)^{p_j}+(-b_j)^{q_j}=(-a'_j)^{p'_j}+(-b_j)^{q'_j}=0, \quad j=1,2.
\end{align}

\begin{figure} [h]
  \centering
  \includegraphics[width=0.5\textwidth]{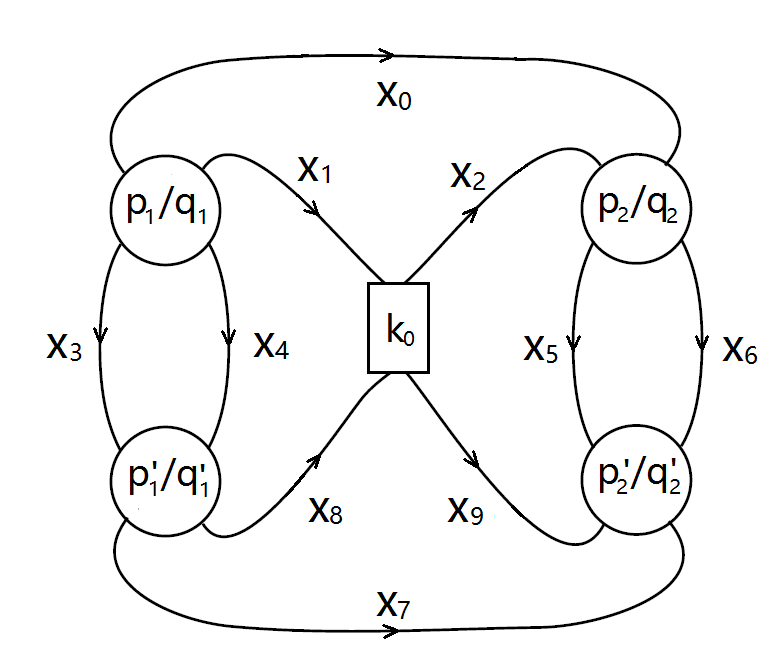}\\
  \caption{$L=N(T)$ with $T=([p_1/q_1]\ast[p'_1/q'_1])+[1/k_0]+([p_2/q_2]\ast[p'_2/q'_2])$} \label{fig:3-bridge2}
\end{figure}

To simplify the writing, let
\begin{align}
h_j=\frac{q_j}{p_j}+\frac{q'_j}{p'_j}, \qquad j=1,2.
\end{align}

\subsubsection{$\rho$ is VR$_1$}

In this case, $a\notin\{\pm 1\}$, $b_0b_1b_2=1$, and $a_ja'_j=a$ if $b_j\notin\{\pm 1\}$.
Up to conjugacy we may assume $X_7=-A_1(a)$, then
\begin{align*}
(X_1,X_8)&=-(X_0,X_7)S_{(a,b_1)},&&\ \\
(X_2,X_{9})&=(X_1,X_8)S_{(a,b_0)}, &&\ \\
(X_3,X_4)&=-(X_0,X_1)S_{(b_1,a_1)} && \text{when}\qquad b_1\notin\{\pm 1\}, \\
(X_5,X_6)&=(X_2,X_0)S_{(b_2,a_2)} && \text{when} \qquad b_2\notin\{\pm 1\};
\end{align*}
when $b_1\in\{\pm 1\}$, $X_3=-A_{c_1}(a_1)$, with $\gamma_{a,a_1}(c_1)=t(a'_1)$, and $X_{4}=-b_1X_3$;
when $b_2\in\{\pm 1\}$, $X_{6}=-A_{c_2}(a_2)$, with $\gamma_{a,a_2}(c_2)=-t(a'_2)$, and $X_5=-b_2X_6$.

\subsubsection{$\rho$ is VR$_0$}

In this case, $a\in\{\pm 1\}$, $\rho_0$ is R, and $X_j=-aX_{j-7}$ for $j=7,8,9$.

For $j=1,2$,
\begin{itemize}
  \item if $\rho_j\in{\rm HR}_0$, then $a'_j\in\{aa_j^{\pm 1}\}$, $X_{3j}=(-1)^jA_{c_j}(a_j)$, and $X_{3j-(-1)^j}=-b_jX_{3j}$;
  \item if $\rho_j\in{\rm HR}_1$, then $a_ja'_j=a$, and
        $$\begin{cases} (X_3,X_4)=-(X_0,X_1)S_{(b_1,a_1)}, & j=1, \\ (X_5,X_6)=(X_2,X_0)S_{(b_2,a_2)}, &j=2; \end{cases}$$
  \item if $\rho_j\in{\rm HNR}_{\ell_j}$, then $a_ja'_j=a$, $h_j=0$, and
        $$\begin{cases} (X_3,X_4)=-a_1(X_0,X_1)C_{b_1}(q_1/p_1), & j=1, \\ (X_5,X_6)=a_2(X_2,X_0)C_{b_2}(q_2/p_2), &j=2. \end{cases}$$
\end{itemize}

To determine $\rho$, it remains to work out $X_1,X_2$. There are 7 cases, as listed below.
In each case, (up to conjugacy) the value of $(X_1,X_2)$ is given in the upper cell, and an extra constraint is given in the lower cell.

\begin{center}
\begin{tabular}{|c|c|c|c|}
  \hline
  \backslashbox{$\rho_1$}{$\rho_2$} & HR$_0$ & HR$_1$ & HNR$_{\ell_2}$ \\
  \hline
  \multirow{2}{*}{HR$_0$} & $(-b_1A,-b_2A)$  & $(-b_1A,-A_1(b_2))$ & \ \\
  \cline{2-3}
      & $b_1b_2=b_0$ & $t(b_2)b_1=t(b_0)$ & \ \\
  \hline
  \multirow{2}{*}{HR$_1$} & $(-A_1(b_1),-b_2A)$ & $(-A_1(b_1),-A_c(b_2))$ & $(-A_c(b_1),-b_2B_{\ell_2})$ \\
  \cline{2-4}
      & $t(b_1)b_2=t(b_0)$ & $\gamma_{b_1,b_2}(c)=-t(b_0)$ & $\delta_{b_1,\ell_2}(c)b_2=-t(b_0)$ \\
  \hline
  \multirow{2}{*}{HNR$_{\ell_1}$} & \ & $(-b_1B_{\ell_1},-A_c(b_2))$ & $(-b_1B_{\ell_1},-b_2B_{\ell_2}(d))$ \\
  \cline{3-4}
      & \ & $\delta_{b_2,\ell_1}(c)b_1=-t(b_0)$ & $\star$ \\
  \hline
\end{tabular}
\end{center}
The $\star$ stands for the condition
$$t(b_0)=\begin{cases} (2-d)b_1b_2, &\ell_1\ne\ell_2, \\ 2b_1b_2, & \ell_1=\ell_2. \end{cases}$$

\subsubsection{$\rho$ is VNR$_\ell$}

In this case, $b_0,b_1,b_2\in\{\pm 1\}$, and $b_0b_1b_2=1$. Up to conjugacy we may assume $X_7=-aB_\ell$.

For $j=1,2$, $\rho_j$ is either HR$_0$ or HNR$_\ell$.
If $\rho_1$ and $\rho_2$ are both HR, then $X_1=-b_1A, X_2=-b_2A$ so that $\rho_0$ is abelian, which is impossible.
Thus there are three possibilities:
\begin{itemize}
  \item If $\rho_1$ is HR$_0$ and $\rho_2$ is HNR$_\ell$, then $h_2=-k_0$, $a_1,a'_1\notin\{\pm 1\}$,  and $X_j=-b_1X_{j-1}$ for $j=1,4,8$.
        Up to conjugacy, $X_3=-A_{c_1}(a_1)$, with $\delta_{a_1,\ell}(c_1)=-t(a'_1)a$;
        The remaining $X_j$'s are determined by
       \begin{align*}
       (X_2,X_9)&=b_0(X_1,X_8)C_a(1/k_0), \\
       (X_5,X_6)&=a_2(X_2,X_0)C_{b_2}(q_2/p_2).
       \end{align*}
  \item If $\rho_1$ is HNR$_\ell$ and $\rho_2$ is HR$_0$, then $h_1=-k_0$, $a_2,a'_2\notin\{\pm 1\}$, and $X_2=-b_2X_0$, $X_5=-b_2X_6$, $X_9=-b_2X_7$. Up to conjugacy, $X_6=A_{c_2}(a_2)$, with $\delta_{a_2,\ell}(c_2)=-t(a'_2)a$.
       The remaining $X_j$'s are determined by
       \begin{align*}
       (X_1,X_8)&=b_0(X_2,X_9)C_a(-1/k_0), \\
       (X_3,X_4)&=-a_1(X_0,X_1)C_{b_1}(q_1/p_1).
       \end{align*}
  \item If $\rho_1$ and $\rho_2$ are both HNR$_\ell$, then
      $$\frac{1}{k_0}+\frac{1}{h_1}+\frac{1}{h_2}=0,$$
      and the $X_j$'s are determined as follow:
      \begin{align*}
      (X_1,X_8)&=-b_1(X_0,X_7)C_a(1/h_1), \\
      (X_2,X_{9})&=-b_2(X_0,X_7)C_a(-1/h_2), \\
      (X_3,X_4)&=-a_1(X_0,X_1)C_{b_1}(q_1/p_1), \\
      (X_5,X_6)&=a_2(X_2,X_0)C_{b_2}(q_2/p_2).
      \end{align*}
\end{itemize}

\medskip

{\bf Acknowledgement}

This work is supported by NSFC-11401014 and NSFC-11501014.


\begin{thebibliography}{}


\bibitem{Ch16}
H.-M. Chen,
\textsl{Trace-free representations of Montesinos links}. \\
J. Knot Theor. Ramif. 27 (2018), no. 8, 1850050 (10 pages).


\bibitem{Ja11}
Y. Jang,
\textsl{Classification of 3-bridge arborescent links}.
Hiroshima Math. J. 41 (2011), 89--136.


\bibitem{KL03}
L.H. Kauffman, S. Lambropoulou,
\textsl{From Tangle Fractions to DNA}.
In: Topology in Molecular Biology, pp. 69--110, Springer Berlin Heidelberg, 2003.

\bibitem{KL04}
L.H. Kauffman, S. Lambropoulou,
\textsl{On the classification of rational tangles}.
Adv. Appl. Math. 33 (2004), no. 2, 199--237.


\bibitem{Le14}
S. Lewallen,
\textsl{Khovanov homology of alternating links and ${\rm SU}(2)$ representations of the link group}.
arXiv:0910.5047.

\bibitem{Li92}
X.-S. Lin,
\textsl{A knot invariant via representation spaces}.
J. Diff. Geom. 35 (1992), 337--357.



\bibitem{NY12}
F. Nagasato, Y. Yamaguchi,
\textsl{On the geometry of the slice of trace-free ${\rm SL}_{2}(\mathbb{C})$-characters of a knot group}.
Math. Ann. 354 (2012), 967--1002.



\end{thebibliography}
\end{document}